\documentclass[11pt]{article}



\usepackage{amsthm,amsmath,amssymb}

\usepackage{graphicx}

\usepackage[colorlinks=true,citecolor=black,linkcolor=black,urlcolor=blue]{hyperref}


\usepackage{stmaryrd}
\usepackage{textcomp}
\usepackage{enumerate}
\usepackage{setspace}
\usepackage{amssymb}
\usepackage{amsthm}
\usepackage{amsmath}
\usepackage{graphicx}
\usepackage{extarrows}
\usepackage{marvosym}
\usepackage{empheq}
\usepackage{latexsym}
\usepackage{endnotes}
\usepackage{fontenc}

\hsize=7in

\vsize=9in

\oddsidemargin  0pt     

\evensidemargin 0pt     

\marginparwidth 40pt    

\marginparsep 10pt      



\topmargin 0pt           


\headsep 10pt            


\textheight 8.4in        


\textwidth 6.5in         

\topmargin 0pt

\headsep 10pt

\bibliographystyle{plain}

\begin{document}
\author{Julian Sahasrabudhe}
\title{Monochromatic Solutions to Systems of Exponential Equations}
\maketitle

\newtheorem{theorem}{Theorem}
\newtheorem{definition}[theorem]{Definition}
\newtheorem{lemma}[theorem]{Lemma}
\newtheorem{claim}[theorem]{Claim}
\newtheorem{subclaim}[theorem]{Sub-Claim}
\newtheorem{corollary}[theorem]{Corollary}
\newtheorem{problem}[theorem]{Problem}
\newtheorem{fact}[theorem]{Fact}
\newtheorem{observation}[theorem]{Observation}
\newtheorem{conjecture}[theorem]{Conjecture}
\newtheorem{question}[theorem]{Question}
\newcommand{\Addresses}{{
\bigskip
\footnotesize
\textsc{ Julian Sahasrabudhe, Department of Mathematics, University of Memphis, Memphis Tennessee, USA}\par\nopagebreak
 \texttt{julian.sahasra@gmail.com}
}}

\begin{abstract}
Let $n\in \mathbb{N}$, $R$ be a binary relation on $[n]$, and $C_1(i,j),\ldots,C_n(i,j) \in \mathbb{Z}$, for $i,j \in [n]$. We define the exponential system of equations $\mathcal{E}(R,(C_k(i,j)_{i,j,k})$ to be the system
\[ X_i^{Y_1^{C_1(i,j)} \cdots Y_n^{C_n(i,j)}  } =  X_j , \text{  for } (i,j) \in R ,
\] in variables $X_1,\ldots,X_n,Y_1,\ldots,Y_n$. The aim of this paper is to classify precisely which of these systems admit a monochromatic solution ($X_i,Y_i \not=1)$ in an arbitrary finite colouring of the natural numbers. This result could be viewed as an analogue of Rado's theorem for exponential patterns.
\end{abstract}

\section{Introduction}
In 2011, Sisto \cite{Si} made the surprising observation that an arbitrary 2-colouring of the natural numbers admits infinitely many integers $a,b>1$ such that $a,b,a^b$ all receive the same colour. He went on to ask if a similar result holds for colourings of the natural numbers with more colours. Brown \cite{Br}, simplifying and extending the proof of Sisto, gave further examples of exponential, monochromatic patterns that are present in an arbitrary $2$-colouring and also proved some weaker results for monochromatic patterns in more colours. In \cite{ExpPatterns} we answered Sisto's question by showing that any \emph{finite} colouring of the positive integers admits $a,b >1$ such that $a,b,a^b$ are monochromatic and went on to develop, in this context, a theory of patterns defined by compositions of the exponential function. In the present paper we turn from the study of patterns arising as compositions of the exponential function, to understand exponential patterns that arise as solutions to systems of equations. 
\paragraph{} 
The motivation for the study of monochromatic solutions to equations lies in the seminal work of Rado \cite{Ra}, who classified the systems of homogeneous linear equations that admit a solution in an arbitrary finite colouring of the natural numbers. More precisely, we say that an $m \times n$ matrix $A$ is \emph{partition regular} if every finite colouring of $\mathbb{N}$ admits monochromatic $x_1,\ldots,x_n \in \mathbb{N}$, for which $Ax = 0$, where $x = (x_1,\ldots,x_n)$. Rado classified the partition regular matrices by giving a simple criterion on the columns of such matrices (to be recalled in section \ref{sec:Preliminaries}). It is in this spirit that the present paper sets out.
\paragraph{}
It is worth pointing out that, even in the classical, linear theory, there is a distinction between studying patters which solve linear systems, and patterns which arise as fixed linear compositions of several free variables. These two types of partition regularity are sometimes termed ``kernel partition regular'' and ``image partition regular'', respectively. So, while Rado's theorem gave a complete understanding of what systems can be solved in an arbitrary colouring, it was not until the work Hindman and Leader \cite{ImagePartReg} that a classification of image partition regular systems was fully understood. We refer the reader to the survey of Hindman \cite{HindSurvey}, for details.
\paragraph{}
Before going further, let us recall some terminology. Let $k \in \mathbb{N}$ and $X$ be a non-empty set. We call a function $f  : \mathbb{N} \rightarrow X$ a \emph{finite colouring} if $X$ is finite, and a $k$-\emph{colouring}, if $|X| \leq k$. We refer to the elements of $X$ as \emph{colours}. We say that a collection $\mathcal{A}$, of ordered tuples of integers, is \emph{partition regular} if for every finite colouring $f: \mathbb{N} \rightarrow X$ we can find $n \in \mathbb{N}$ and $x_1,\ldots,x_n \in \mathbb{N}$, such that $f(x_1) = \cdots = f(x_n)$ and $(x_1,\ldots,x_n) \in \mathcal{A}$. It shall also be  convenient to use the notation $\star$ to denote exponentiation. That is, define the binary operation $\star$ as $a \star b  = a^b$, for $a,b \in \mathbb{N}$. 
\paragraph{}
For $n \in \mathbb{N}$, let $R$ be a binary relation on $[n]$. Given integers $C_1(i,j),\ldots,C_n(i,j) \in \mathbb{Z}$, for $i,j \in [n]$, we define the system of equations $\mathcal{E}\left(R,\{C_k(i,j)\}_{i,j,k}\right)$ by 
\begin{equation} X_i^{Y_1^{C_1(i,j)} \cdots Y_n^{C_n(i,j)}  }= X_j , \text{  for } (i,j) \in R,
\end{equation} where $X_1,\ldots,X_n,Y_1,\ldots,Y_n$ are variables.
\paragraph{}
Our main result will tell us that the above system of equations has a monochromatic solution in every finite colouring if and only if 
an associated system of \emph{linear} equations in the variables $Y_1,\ldots,Y_n$ has a solution. Of course, this will result in a classification, by appealing to Rado's Theorem, mentioned above.
\paragraph{}
To define the \emph{associated linear system}, $\mathcal{L}\left(R,\{C_k(i,j)\}_{i,j,k}\right)$, we treat $R$ as a directed graph $D = ([n],R)$ and let $\mathcal{L}\left(R,\{C_k(i,j)\}_{i,j,k}\right)$ be the system of equations, indexed by the (not necessarily directed) cycles $C$ of $D$,
\begin{equation} \label{equ:AssLinSystem} \sum_{e \in C} (-1)^{d(e)}\left(C_1(e)Y_1 + \ldots + C_n(e)Y_n \right) = 0,
 \end{equation} where, for each cycle $C$, we fix some orientation and then define $d(e) = 0$ if the edge $e$ is oriented in the same way as the cycle and $d(e) = 1$ if the orientation of the edge and the cycle are different. 
We may now state our main theorem. 
\begin{theorem} \label{thm:MainClassification}
For $n \in \mathbb{N}$, let $R \subseteq [n]\times [n]$, and $C_k((i,j)) \in \mathbb{Z}$, for each $i,j,k \in [n]$. The system of exponential equations $\mathcal{E}\left(R,\{C_k(i,j)\} \right)$ is partition regular if and only if $\mathcal{L}\left(R,\{C_k(i,j)\} \right)$ is partition regular. \end{theorem}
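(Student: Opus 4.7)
The plan is to prove both directions. The necessity direction ($\mathcal{E}$ partition regular $\Rightarrow$ $\mathcal{L}$ partition regular) is the cleaner one. Given a finite coloring $\chi'$ of $\mathbb{N}$, I would introduce the pullback $\chi(n) := \chi'(\Omega(n))$, where $\Omega(n)$ denotes the number of prime factors of $n$ counted with multiplicity. Applying the assumed partition regularity of $\mathcal{E}$ to $\chi$ produces monochromatic $X_i, Y_k \geq 2$ solving the system. Since $\Omega$ extends to a homomorphism $\mathbb{Q}_{>0} \to \mathbb{Z}$, applying it to each equation $X_i^{\prod_k Y_k^{C_k(i,j)}} = X_j$ yields $\Omega(X_i)\prod_k Y_k^{C_k(i,j)} = \Omega(X_j)$. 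Multiplying these around a cycle $C$ of $D$ weighted by the orientation signs $(-1)^{d(e)}$ causes the $\Omega(X_i)$ factors to telescope, leaving $\prod_k Y_k^{\alpha_k(C)} = 1$. A second application of $\Omega$ then gives $\sum_k \alpha_k(C)\Omega(Y_k) = 0$, so $y_k := \Omega(Y_k) \in \mathbb{N}$ solves $\mathcal{L}$, and monochromaticity under $\chi'$ is inherited because $\chi'(\Omega(Y_k)) = \chi(Y_k)$ is constant.

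For the sufficiency direction ($\mathcal{L}$ partition regular $\Rightarrow$ $\mathcal{E}$ partition regular), I would first consider the augmented linear system $\mathcal{L}'$ in variables $(y_k, e_i)_{k, i \in [n]}$ with one equation $e_j - e_i - \sum_k C_k(i,j) y_k = 0$ for each $(i,j) \in R$, and verify that $\mathcal{L}'$ is partition regular if and only if $\mathcal{L}$ is. The nontrivial direction here uses Rado's columns condition: the $e_i$-columns form an initial block summing to zero (the ``coboundary'' block), and the partition of the $y_k$-columns witnessing Rado's condition for $\mathcal{L}$ can be reused for the subsequent blocks, because the discrepancy between any partial $\mathcal{L}'$-sum and its cycle projection lies in the span of the $e_i$-columns. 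I would then fix a prime $p$ and extract a candidate monochromatic solution to $\mathcal{E}$ by applying partition regularity of $\mathcal{L}'$ to a suitable finite coloring derived from $\chi$, setting $Y_k := p^{y_k}$ and $X_i := p^{p^{e_i}}$; one checks that the $\mathcal{L}'$ edge equations translate precisely into the required exponential equations after two layers of exponentiation.

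The main obstacle I foresee is the \emph{color alignment} between the $X_i$'s and $Y_k$'s. With the parameterization above, $\chi(Y_k) = \chi(p^{y_k})$ depends on one exponential level of $y_k$ while $\chi(X_i) = \chi(p^{p^{e_i}})$ depends on two levels of $e_i$, so a monochromatic solution to $\mathcal{L}'$ under the product coloring $m \mapsto (\chi(p^m), \chi(p^{p^m}))$ only forces each group of variables to be internally monochromatic, not that the two groups share a common color. To reconcile them I expect to invoke the exponential-pattern machinery of \cite{ExpPatterns}, in the spirit of the theorem that in every finite colouring of $\mathbb{N}$ there exist $a, b > 1$ with $a, b, a^b$ monochromatic, which provides the means to cross exponential levels within a single colour class. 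The technical heart of the proof will lie in weaving this exponential-pattern step together with the application of $\mathcal{L}'$ so that both the cycle constraints and the colour-matching are enforced simultaneously.
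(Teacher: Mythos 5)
Your necessity direction is correct and follows the same idea as the paper: replace each integer by $\nu(x)=\Omega(x)$ and observe that composing a colouring of $\mathbb{N}$ with $\Omega$ transports monochromatic solutions of $\mathcal{E}$ to monochromatic solutions of $\mathcal{L}$. You take a slightly cleaner route through the argument, telescoping the single-$\Omega$ equations multiplicatively around a cycle before the second application of $\Omega$ (extending $\Omega$ to $\mathbb{Q}_{>0}$), which sidesteps the small technicality the paper has to handle about $\nu^2$ being undefined at primes; this is a genuine if minor improvement.

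The sufficiency direction, however, has a real gap, and it is exactly the gap that the paper's Theorem~\ref{thm:LiftingPatterns} is designed to close. Your augmented system $\mathcal{L}'$ and the parameterisation $Y_k = p^{y_k}$, $X_i = p^{p^{e_i}}$ is essentially the same combinatorial skeleton as the paper's construction (the paper produces the $e_i$ as path-sums $k_i$ rather than as extra Rado variables, but these are two bookkeepings of the same thing, and your Rado-columns argument that $\mathcal{L}'$ inherits partition regularity from $\mathcal{L}$ is plausible). The problem you flag at the end --- that under $m \mapsto (\chi(p^m), \chi(p^{p^m}))$ you can make the $Y_k$ internally monochromatic and the $X_i$ internally monochromatic, but cannot force the two groups to share a colour --- is not a technicality to be patched by ``invoking the exponential-pattern machinery of \cite{ExpPatterns}.'' That citation gives monochromatic $a,b,a^b$, which is a single unstructured exponential step; what is needed here is to enforce simultaneously (i) a linear constraint on the exponents $y_k$ at the inner level, (ii) a window of the form $a^{b^1},\dots,a^{b^{W}}$ at the outer level so that whatever $e_i$ (or $k_i$) turn out to be the values $a^{b^{e_i}}$ already sit in the monochromatic set, and (iii) colour agreement across the two levels. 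The paper achieves this with Lemma~\ref{MainLemma}, a van~der~Waerden / compactness induction that builds the idealised colourings $\widetilde f_1,\dots,\widetilde f_r$ and culls colours one at a time to obtain a contradiction; this is the bulk of the technical work and has no analogue in your sketch. As written, your proposal identifies the obstacle correctly but does not resolve it, so the ``if'' direction is incomplete.
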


So, for example, every finite colouring of the positive integers admits a monochromatic solution to the equation  
\[ X_1^{Y_1 \cdot Y_2} = X_2^{Y_3\cdot Y_4} ,
\] with $X_1,X_2,Y_1,Y_2,Y_3,Y_4 >1 $. While there is a colouring forbidding monochromatic solutions to equations of the form
\[ X^{Y^2} = X^{Z},
\] with $X,Y,Z>1$. 

\paragraph{}
One can readily get a feel for the``only if'' implication in the theorem. The first step is to notice that we may obtain a system of the form $\mathcal{L}$ from the associated exponential system $\mathcal{E}$, by first applying $\nu \circ \nu$ to both sides of the equations in $\mathcal{E}$, where $\nu$ is a logarithm-type function. That is, $\nu$ is an appropriate function which satisfies $\nu(a^b) = b\nu(a)$. If we then take linear combinations to eliminate the terms of the form $\nu^2(X_i)$, we obtain a system of the general shape of $\mathcal{L}$. From here it is not hard to see that if $c$ is a colouring forbidding a monochromatic solution to $\mathcal{L}$, then $c(\nu(x))$ is a colouring forbidding a solution to the original, exponential equation. 


\paragraph{}
To prove the ``if'' direction of the theorem, we show that if $\mathcal{A} \subseteq \mathbb{N}^n$ is partition regular we can ``lift'' $\mathcal{A}$ to find an associated exponential pattern that is also partition regular. More precisely, if $n \in \mathbb{N}$, $\mathcal{A} \subset \mathbb{N}^n$, and $W : \mathbb{N}^n \rightarrow \mathbb{N}$ is an arbitrary ``weight'' function, we define the \emph{exponential} $\mathcal{A}$\emph{-patten with weight} $W$, as follows. For each $(x_1,\ldots,x_n) \in \mathcal{A}$, we include into the associated exponential pattern, the following $(W(x_1,\ldots,x_n) + n +1)$-tuple consisting of the elements 
\[ a ,b^{x_1}, \ldots,b^{x_n},  
\] along with
\[ a^{b^1},a^{b^2}, \ldots , a^{b^{W(x_1,\ldots, x_n)}},
\] for each $a,b>1$. We show the following.

\begin{theorem} \label{thm:LiftingPatterns}
Let $n \in \mathbb{N}$, and $W : \mathbb{N}^n \rightarrow \mathbb{N}$ be a function. If $\mathcal{A} \subseteq \mathbb{N}^n$ is partition regular, then the associated exponential $\mathcal{A}$-pattern, with weight $W$, is also partition regular.
\end{theorem}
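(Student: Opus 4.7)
The plan is to combine the partition regularity of $\mathcal{A}$ with the monochromatic exponential tower machinery from \cite{ExpPatterns}, via an auxiliary colouring. Fix a number of colours $r$. As a preliminary reduction, I would use a standard compactness argument: partition regularity of $\mathcal{A}$ together with the compactness of $[r]^{\mathbb{N}}$ yields some $N = N(r)$ such that every $r$-colouring of $[N]$ contains a monochromatic tuple in $\mathcal{A} \cap [N]^n$. Since this finite set takes only finitely many values of $W$, we may replace $W$ by a constant $W_0 = W_0(r)$ depending only on the number of colours, and so reduce to finding a monochromatic version of the pattern with constant weight $W_0$.

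\paragraph{}
Given any $r$-colouring $\chi : \mathbb{N} \to [r]$, I would then introduce an auxiliary colouring $\eta : \mathbb{N} \to [r]$ that records, for each base $b > 1$, the colour delivered by partition regularity of $\mathcal{A}$ applied to the pushed-forward colouring $\chi_b(y) := \chi(b^y)$. Concretely, partition regularity of $\mathcal{A}$ produces for each $b$ a tuple $(x_1(b), \dots, x_n(b)) \in \mathcal{A} \cap [N]^n$ with all $\chi(b^{x_i(b)})$ equal to some common colour $c_b$, and we set $\eta(b) := c_b$. The whole pattern then reduces to finding $a, b > 1$ satisfying the joint condition
\[ \chi(a) = \chi(a^{b^k}) = \eta(b) \qquad \text{for all } k \in [W_0], \]
since the already-chosen witness $(x_1(b), \dots, x_n(b))$ fills in the remaining coordinates $b^{x_i(b)}$ in the same colour.

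\paragraph{}
The final step is to find such $a, b$. I would extend the monochromatic tower theorem from \cite{ExpPatterns} (which produces patterns of the form $a, b, a^b$ and compositions thereof) to the joint colouring $\Phi(x) = (\chi(x), \eta(x))$, yielding $a, b > 1$ with $\Phi(a^{b^k})$ constant in $k \in [W_0]$. The \emph{diagonal} requirement $\chi(a) = \eta(b)$ is not supplied directly by a standard tower theorem, and this is where I expect the main technical difficulty: it couples two different colourings across two different variables, whereas a monochromatic tower in $\Phi$ only gives $\chi(a) = \chi(b)$ and $\eta(a) = \eta(b)$ separately. To overcome this I would pre-select, by pigeonhole over the $r$ colours, a single colour $c$ for which the set $B_c := \{ b : \eta(b) = c\}$ is rich (for instance, lies in a chosen member of a multiplicative idempotent ultrafilter), and then run the tower construction with $a$ constrained to $\chi^{-1}(c)$ and $b$ constrained to $B_c$. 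I anticipate that the cleanest way to enforce such simultaneous constraints is via a Galvin--Glazer style ultrafilter argument in $\beta\mathbb{N}$, choosing an idempotent compatible with both the multiplicative structure driving the tower and the colour class of $\eta$; alternatively, a suitably supersaturated version of the tower theorem of \cite{ExpPatterns} would allow the diagonal condition to be forced by a further Ramsey argument on colour classes.
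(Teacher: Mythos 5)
Your reduction to a constant weight $W_0$ is sound, and recasting the role of $\mathcal{A}$ via the auxiliary colouring $\eta$ is a reasonable way to reorganise the statement, but the proof has a genuine gap exactly at the point you flag: there is no mechanism forcing the diagonal condition $\chi(a) = \eta(b)$. The two sources of monochromaticity --- the $\mathcal{A}$-witness for $y \mapsto \chi(b^y)$ on one hand, and a $\chi$-monochromatic tower $a, a^{b}, \ldots, a^{b^{W_0}}$ on the other --- each come in an a priori unknown colour, and nothing in your set-up makes these two colours agree. The pigeonhole-plus-ultrafilter fix you sketch does not close this: if you pre-select a colour $c$ so that $B_c = \eta^{-1}(c)$ is large in some idempotent, and then build a tower with $b$'s drawn from $B_c$, the tower will still be $\chi$-monochromatic in some colour $c'$ that you cannot force to equal $c$. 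In addition, $\star$ is not associative, so there is no Galvin--Glazer semigroup to hand; this is precisely why both \cite{ExpPatterns} and the present paper proceed by van der Waerden, compactness and induction rather than by ultrafilters.

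The paper's Lemma~\ref{MainLemma} resolves the coupling by never decoupling: it colours $x$ by the entire vector $\bigl(f(2^{d(M)x2^{1}}), \ldots, f(2^{d(M)x2^{M'}})\bigr)$ and applies partition regularity of $\mathcal{A}$ to this product colouring, so the choice of the $\mathcal{A}$-tuple and the behaviour at all tower heights $y \in [M']$ are fixed simultaneously along a single $d$-sequence. Colour-matching is then extracted by finding a long monochromatic progression in a fixed colour $c_p$ inside an ``idealized'' colouring $\widetilde{f}_p$ (van der Waerden together with a compactness limit of restricted colourings), and the contradiction is obtained by an induction that manufactures more distinct large colours than $k$ allows. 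To repair your argument you would need some analogue of this simultaneous treatment; as written, the final step is a hope rather than a proof.
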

\paragraph{}
In the next section, we quickly recall some of the theory relevant to this paper and introduce the central definitions. 
In Section~\ref{sec:ProofOfLiftingThm}, we prove Theorem~\ref{thm:LiftingPatterns}. Finally, in Section~\ref{sec:ProofOfClassification}, we use this result to deduce our classification theorem, Theorem~\ref{thm:MainClassification}.

\section{Preliminaries} \label{sec:Preliminaries}

\paragraph{}
We start by recalling Rado's classical theorem on partition regular systems of linear equations \cite{Ra}. For $m,n\in \mathbb{N}$, we let $A$ be a $m\times n$ matrix with integer entries. We say that $A$ is \emph{partition regular} if the collection $\{ x \in \mathbb{N}^n : Ax = 0 \}$ is partition regular. If we let $v_1,\ldots,v_n \in \mathbb{Z}^n$ denote the column vectors of $A$, we say that $A$ satisfies the \emph{columns property} if one can partition $[n] = S_0 \cup \cdots \cup S_d$, for some $d \in [0,n-1]$, so that $ \sum_{j \in S_0}  v_j = 0$, while  $\sum_{j \in S_i} v_j $, lies in the $\mathbb{Q}$-linear span of the vectors of $S_0 \cup \cdots \cup S_{i-1}$, for each $i \in [n]$. Rado's theorem establishes that these two properties of $A$ are equivalent. 
\begin{theorem} \label{thm:RadoTheorem}
For $m,n \in \mathbb{N}$, let $A$ be an $m\times n$ matrix with integer entries, then $A$ is partition regular if and only if $A$ has the columns property. 
\end{theorem}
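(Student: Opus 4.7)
The plan is to prove the two implications of Rado's theorem separately. Throughout, write $v_1, \ldots, v_n$ for the column vectors of $A$, so that the system $Ax = 0$ reads $\sum_{i=1}^n x_i v_i = 0$.

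For the ``only if'' direction, the idea is to exhibit, for every $A$ lacking the columns property, an explicit colouring with no monochromatic solution. The canonical construction uses base-$p$ digit expansions: fix a large prime $p$, and colour each $n \in \mathbb{N}$ by the pair consisting of the position of its leading nonzero base-$p$ digit together with that digit itself. Given a putative monochromatic solution $(x_1, \ldots, x_n)$, group the indices of $[n]$ according to the successive blocks of base-$p$ digits appearing in the $x_i$. Matching coefficients in $\sum_i x_i v_i = 0$ block by block and taking $p$ larger than the entries of $A$ forces $\sum_{j \in S_0} v_j = 0$ for the bottom block and $\sum_{j \in S_i} v_j$ to be a $\mathbb{Q}$-linear combination of earlier columns for subsequent blocks, producing exactly the columns property.

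For the ``if'' direction, the standard approach is through Deuber's theorem on $(m,p,c)$-sets. An $(m,p,c)$-set is an arithmetically rigid structure generated by parameters $y_1, \ldots, y_m$ and closed under certain bounded integer combinations, and Deuber's theorem states that every finite colouring of $\mathbb{N}$ contains, for any prescribed $m,p,c$, a monochromatic $(m,p,c)$-set. I would first show, by a direct inductive argument, that whenever $A$ satisfies the columns property and $m,p,c$ are chosen large enough in terms of the entries of $A$, every $(m,p,c)$-set contains a solution to $Ax = 0$. The inductive step uses $\sum_{j \in S_0} v_j = 0$ to assign a common value to the ``top'' block of variables, and the $\mathbb{Q}$-linear dependence of $\sum_{j \in S_i} v_j$ on earlier columns to assign values level by level using the generators $y_1, \ldots, y_m$; clearing denominators is what forces $c$ to be large.

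The principal obstacle is the sufficiency direction, where genuine Ramsey-theoretic input (namely Deuber's theorem, whose original proof proceeds by an iterated Hales--Jewett style argument, or alternatively by an ultrafilter argument on $\beta \mathbb{N}$) is indispensable. By contrast, both the base-$p$ colouring in the necessity direction and the algebraic reduction from ``columns property'' to ``solvability inside a sufficiently large $(m,p,c)$-set'' in the sufficiency direction are delicate but essentially mechanical once the right parameters are chosen.
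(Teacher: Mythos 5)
The paper does not prove Theorem~\ref{thm:RadoTheorem}; it cites Rado's 1933 result as a black box and, in Section~\ref{sec:Preliminaries}, only \emph{recalls} the Rado colourings without using them. So there is no proof in the paper to compare against directly, and I will assess your outline on its own terms.

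Your sufficiency direction, via Deuber's theorem on $(m,p,c)$-sets, is a sound sketch of a now-standard modern argument: one shows that a system satisfying the columns property is soluble inside every sufficiently large $(m,p,c)$-set, and Deuber's theorem guarantees monochromatic $(m,p,c)$-sets in any finite colouring. It is worth noting this is not Rado's original route --- Deuber's $(m,p,c)$-sets came some forty years later; Rado's own proof was a direct induction built on van der Waerden's theorem --- but the approach you describe is valid.

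The necessity direction has a genuine gap. First, the colouring you propose --- the \emph{pair} consisting of the position of the leading nonzero base-$p$ digit together with the digit itself --- is not a finite colouring, since the position is unbounded; partition regularity concerns finite colourings, so this cannot serve as stated. Second, and more substantively, the classical Rado colouring (which the paper also states) uses the \emph{trailing}, not leading, nonzero digit: $c_p(x)$ is the coefficient of $p^k$ in the base-$p$ expansion of $x$, where $k$ is the largest integer with $p^k \mid x$, giving a $(p-1)$-colouring. The block-by-block cancellation you describe depends crucially on the divisibility structure of the trailing digit: sorting the $x_i$ by $p$-adic valuation and reducing $\sum_i x_i v_i = 0$ modulo successively higher powers of $p$ yields first $\sum_{j\in S_0} v_j \equiv 0$, and then the $\mathbb{Q}$-linear dependencies, once $p$ is large compared to the relevant determinants and entries of $A$. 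The leading digit carries no such divisibility information --- writing $x = d p^{k} + r$ with $0 \le r < p^{k}$ leaves $r$ completely unconstrained --- so carries propagate uncontrollably across your ``blocks'' and the coefficient-matching step does not close. Replacing ``leading'' by ``trailing'' and dropping the position from the colour repairs the outline and recovers the argument sketched in the paper.
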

Although we shall not require them explicitly in the present paper, it is convenient to recall the \emph{Rado colourings}. These colourings were introduced by Rado to demonstrate the non-partition regularity of matrices without the columns property \cite{Ra} (See \cite{GRS}). For a prime $p$ and $x \in \mathbb{N}$, we define the colouring $c_p : \mathbb{N} \rightarrow [p-1]$ by defining $c_p(x)$ as the cofficient of $p^k$ in the base-$p$ expansion of $x$, where $k$ is the largest integer so that $p^k$ divides $x$. Rado proved that if $A$ fails to have the columns property then, for sufficiently large primes $p$, if $x_1,\ldots,x_n$ are integers such that $c_p(x_1) =  \cdots = c_p(x_n)$ then $Ax \not= 0$, where $x = (x_1,\ldots,x_n)$.
\paragraph{}
Turning now to introduce some important notions, let $f : \mathbb{N} \rightarrow [k]$ be a $k$-colouring and let $f_{n}: \mathbb{N} \rightarrow [k] $ be $k$-colourings, for $n \in \mathbb{N}$. We say that the sequence $\{ f_n \}$ \emph{converges to} $f$ and write $f_n\rightarrow f$ if for every $m \in \mathbb{N}$ there exists some $M \in \mathbb{N}$ so that for all $n \geq M$, $f_n(x) = f(x)$, for all $x \in [m]$. As expected, we also say that a sequence \emph{converges} if there exists some $f$ that the sequence converges to. The following basic fact on sequences of colourings $f : \mathbb{N} \rightarrow [k]$ is often referred to as the \emph{compactness property}.

\begin{fact} \label{Compactness} Given a sequence of colourings $f_{n} : \mathbb{N} \rightarrow [k]$ there exists some 
$f : \mathbb{N} \rightarrow [k] $ and a strictly increasing sequence $\{N(n) \}_n \subseteq \mathbb{N}$ for which
\[ f_{N(n)} \rightarrow f, \] as $ n \rightarrow \infty$.
\end{fact}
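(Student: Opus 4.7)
The plan is to prove this by a standard diagonal extraction argument, exploiting the fact that each coloring takes values in the finite set $[k]$. The underlying idea is that the space $[k]^{\mathbb{N}}$, equipped with the product of discrete topologies, is sequentially compact, and the compactness property is just the translation of this into the language of colorings. Concretely, I would produce the limit coloring $f$ one coordinate at a time, by repeatedly applying the pigeonhole principle, and then diagonalize.

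First, among the values $\{f_n(1) : n \in \mathbb{N}\} \subseteq [k]$, some element $c_1 \in [k]$ is attained for infinitely many $n$, so I extract an infinite subsequence $\{f_{N_1(j)}\}_{j}$ with $f_{N_1(j)}(1) = c_1$ for all $j$. Working inside this subsequence, the values at $2$ again lie in $[k]$, so pigeonhole yields a sub-subsequence $\{f_{N_2(j)}\}_j$ on which $f_{N_2(j)}(1) = c_1$ and $f_{N_2(j)}(2) = c_2$ for some $c_2 \in [k]$. Iterating, I obtain nested infinite subsequences $\{f_{N_m(j)}\}_j$ and constants $c_1, c_2, \ldots \in [k]$ such that $f_{N_m(j)}(x) = c_x$ for all $x \in [m]$ and all $j \in \mathbb{N}$. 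At each stage of the extraction I choose the new index strictly larger than all previously chosen ones, which is possible because the subsequence being thinned is infinite.

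Now define $f : \mathbb{N} \to [k]$ by $f(x) = c_x$ and set $N(n) = N_n(n)$. By construction $N$ is strictly increasing. Given any $m \in \mathbb{N}$, take $M = m$: for every $n \geq M$, the index $N(n) = N_n(n)$ lies in the $n$-th subsequence, which is a sub-subsequence of the $m$-th, so $f_{N(n)}(x) = c_x = f(x)$ for every $x \in [m]$. This is exactly the convergence $f_{N(n)} \to f$ demanded by the definition.

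I do not expect any real obstacle: the proof is purely combinatorial, and the only delicate point is the bookkeeping needed to ensure that $N$ is strictly increasing while simultaneously passing through all the nested subsequences, which is handled by the remark above.
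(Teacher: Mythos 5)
Your proof is correct. The paper states this as a well-known fact and gives no proof (it is simply the sequential compactness of $[k]^{\mathbb{N}}$ in the product topology), so there is nothing to compare against; your diagonal extraction argument via iterated pigeonhole is the standard way to establish it, and the bookkeeping ensuring $N$ is strictly increasing and that $N(n)$ lies in all the first $n$ nested subsequences is handled correctly.
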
 \qed

We also make use of the following consequence of the compactness property. Given a partition regular collection $\mathcal{A} \subseteq \mathbb{N}^n$ and a positive integer $k$, there exists a minimum integer $P(\mathcal{A};k)$ such that every $k$-colouring of $\mathbb{N}$ admits a monochromatic $(x_1,\ldots,x_n) \in \mathcal{A}$ with $x_1,\ldots,x_n \leq P(\mathcal{A};k)$. 
\paragraph{}
We make considerable use of van der Waerden's classical theorem \cite{vdW}, which states that for every $k,l \in \mathbb{N}$, there exists a minimal integer $W_k(l)$ such that every $k$-colouring of an arithmetic progression of length $W_k(l)$ contains a monochromatic sub-progression of length $l$.
\paragraph{}
Now, for $r \in \mathbb{N}$, let $f_1, \ldots ,f_r : \mathbb{N} \rightarrow [k]$ be $k$-colourings. We call a sequence of colours
$c_1, \ldots ,c_r \in [k]$ \emph{large for} $f_1, \ldots ,f_r$ if for every $M \in \mathbb{N}$ we can find a progression $P_M$ of length $M$, such that $f_i(P_M) = c_i$, for each $i \in [r]$. The following two facts now follow easily from van der Waerden's theorem. 

\begin{corollary} \label{cor:vdw}
If $f$ is a finite colouring of $\mathbb{N}$, there exists a colour that is large with respect to $f$.  \qed
\end{corollary}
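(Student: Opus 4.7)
The plan is to deduce the statement directly from van der Waerden's theorem combined with a pigeonhole argument on the colour set. Unpacking the definition in the case $r=1$, I need to produce a single colour $c \in [k]$ such that, simultaneously for every $M \in \mathbb{N}$, there is an arithmetic progression of length $M$ on which $f$ is constantly $c$.

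First I would apply van der Waerden's theorem: for each $M \in \mathbb{N}$, any $k$-colouring of $[W_k(M)]$ contains a monochromatic progression of length $M$, so in particular the restriction of $f$ yields such a progression, say $P_M$, of some colour $c_M \in [k]$. This gives me, for each $M$, \emph{some} colour with a monochromatic progression of length $M$, but the colour is allowed to depend on $M$; to meet the definition of ``large'' I must find one colour that works for all $M$ simultaneously.

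Next I would apply pigeonhole to the infinite sequence $(c_M)_{M \in \mathbb{N}}$ taking values in the finite set $[k]$: some colour $c \in [k]$ satisfies $c_M = c$ for infinitely many $M$. This means there is an unbounded sequence of lengths $M_1 < M_2 < \cdots$ such that each $P_{M_t}$ is monochromatic of colour $c$. Finally, given any $M \in \mathbb{N}$, choose $t$ with $M_t \geq M$; since any sub-progression of a progression is again a progression, the first $M$ terms of $P_{M_t}$ form a progression of length $M$ on which $f \equiv c$. This verifies that $c$ is large with respect to $f$.

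There is no real obstacle here since van der Waerden is being invoked as a black box; the only thing to be careful about is exactly the point flagged in the feedback, namely that one must lift ``for each $M$ some colour works'' to ``one colour works for every $M$'', which is precisely what the pigeonhole step accomplishes.
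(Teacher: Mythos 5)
Your argument is correct and is exactly the intended one: the paper leaves this corollary unproved, but the same van der Waerden plus ``some colour occurs infinitely often'' pigeonhole step appears explicitly in its proof of Lemma~\ref{ExtentionLemma}. Nothing is missing.
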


\begin{lemma} \label{ExtentionLemma} For $r \in \mathbb{N} $, let $f_1, \ldots ,f_{r},f_{r+1}, : \mathbb{N} \rightarrow [ k ]$ be $k$-colourings. If $c_1, \ldots ,c_r \in [ k ]$ is large with respect to $f_1, \ldots ,f_r$, then there exists a colour $c_{r+1} \in [k]$ so that 
$c_1,\ldots,c_{r+1}$ is large with respect to $f_1,\ldots,f_{r+1}$.
\end{lemma}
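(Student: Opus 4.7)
The strategy is the familiar van der Waerden pigeonhole argument: to extend largeness by one colouring, first absorb the cost of a van der Waerden step into the length, then use finiteness of the palette to pick one consistent colour.

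More precisely, the plan is as follows. Fix $M \in \mathbb{N}$ arbitrarily; I need to exhibit a progression of length $M$ on which $f_1, \ldots, f_r$ take the values $c_1, \ldots, c_r$ and on which $f_{r+1}$ takes some value $c_{r+1}$ independent of $M$. Since $c_1, \ldots, c_r$ is large for $f_1, \ldots, f_r$, applying the definition with the (larger) target length $W_k(M)$ produces a progression $P$ of length $W_k(M)$ on which each $f_i$, $i \leq r$, is constantly $c_i$. Now apply van der Waerden's theorem to the restriction $f_{r+1}|_P$: because $|P| = W_k(M)$, there is a sub-progression $Q \subseteq P$ of length $M$ on which $f_{r+1}$ is constant, taking some value $\gamma(M) \in [k]$. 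Since $Q \subseteq P$, we still have $f_i(Q) = c_i$ for each $i \leq r$.

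This produces, for every $M$, a progression of length $M$ witnessing the joint colour pattern $(c_1, \ldots, c_r, \gamma(M))$. The only issue is that $\gamma(M)$ a priori depends on $M$, whereas the definition of largeness demands a single colour $c_{r+1}$. I resolve this by pigeonhole on the finite palette $[k]$: some colour $c^* \in [k]$ satisfies $\gamma(M) = c^*$ for infinitely many $M$. Given any target length $M_0$, pick $M \geq M_0$ with $\gamma(M) = c^*$, and then take any sub-progression of length $M_0$ of the witnessing progression $Q$ of length $M$; this sub-progression is still a sub-progression of $P$, so the values of $f_1,\ldots,f_r$ are unchanged at $c_1,\ldots,c_r$, and $f_{r+1}$ is still constantly $c^*$ on it. Setting $c_{r+1} := c^*$ then makes $c_1, \ldots, c_{r+1}$ large for $f_1, \ldots, f_{r+1}$.

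The only delicate point is the last one — ensuring that a single $c_{r+1}$ works uniformly in $M$ — and it is handled cleanly by pigeonhole together with the trivial observation that an arithmetic progression of length $M$ contains arithmetic sub-progressions of every smaller length, so that witnesses at one length yield witnesses at all shorter lengths. No other obstacle is present: the proof amounts to one application of van der Waerden, with an inflated target length, followed by pigeonhole on $[k]$.
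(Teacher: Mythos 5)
Your proof is correct and is essentially identical to the paper's: inflate the target length to $W_k(M)$, apply van der Waerden to $f_{r+1}$ on the resulting progression, and then use the pigeonhole principle over the finite palette to extract a single colour that works for infinitely many (hence all) lengths. The only cosmetic difference is that you spell out the final step of passing to a sub-progression of the desired length, which the paper leaves implicit.
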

\begin{proof}
Let $M \in \mathbb{N}$ be a parameter. Now use the fact that $c_1, \ldots , c_r$ is large for $f_1, \ldots , f_r$ to find a progression $P_M$ of length $W_k(M)$ so that $f_i(P_M) = \{ c_i \}$ for $ i \in [r]$. Applying van der Waerdens's theorem to the colouring $f_{r+1}$, along the progression $P_M$, we obtain a monochromatic progression $P'_M \subseteq P_M$ of length $M$, for which $f_i(P'_M) = c_i$, for $i\in [r]$ and $f_{r+1}(P'_M) = c(M)$, for some $c(M) \in [k]$. To finish, apply the above for all choices of $M \in \mathbb{N}$. There is some value of $[k]$ that is attained infinitely often as a value of $c(M)$. We set $c_{r+1}$ to be this value.
\end{proof}
For a each positive integer $d$, we define the $d$-sequence to be the sequence of integers $\{ 2^{d2^x} \}_x$. Given a colouring $f : \mathbb{N} \rightarrow [k]$, we record the colouring ``restricted'' to the $d$-sequence as the colouring $f_{d} : \mathbb{N} \rightarrow [k]$, defined by $f_d(x) = f\left(2^{d2^{x}} \right)$, for $x \in \mathbb{N}$.

\section{Lifting partition regular patterns} \label{sec:ProofOfLiftingThm}
 
In this section we prove Theorem~\ref{thm:LiftingPatterns} on ``lifting'' partition regular patterns. The core of the proof is contained in the following lemma, which works by constructing, at each stage, a huge number of sequences which approximate some ``idealized'' colourings $\widetilde{f}_1,\ldots,\widetilde{f}_r$. Each ``idealized'' colouring will then act as a mold to help us look for future sequences which, in turn, approximate a new idealized colouring $\tilde{f}_{r+1}$. Assuming that we don't find the appropriate pattern, we shall observe that the colouring along our new sequences becomes more and more restricted, until we obtain a contradiction. 
\paragraph{}
We should note that it is possible to replace our infinitary arguments with finitary ones. However, this exchange would come at the cost of added clutter and difficulty for the reader. 

\begin{lemma} \label{MainLemma}
For $n \in \mathbb{N}$, let $W : \mathbb{N}^n \rightarrow \mathbb{N}$ be a function and let $\mathcal{A} \subseteq \mathbb{N}^n$ be partition regular. If $r,k \in \mathbb{N}$ and $f : \mathbb{N} \rightarrow [k] $ is a colouring that admits no monochromatic, exponential $\mathcal{A}$-system with weight $W$, then we can find colours $c_1, \ldots ,c_{r}$
and corresponding colourings $\widetilde{f}_1, \ldots , \widetilde{f}_r : \mathbb{N} \rightarrow [k]$ so that the following hold.
\begin{enumerate}
\item  For each $i \in [r]$ there exists a sequence of integers $\{d_i(n)\}_n$ so that

\[  f_{d_i(n)} \rightarrow \widetilde{f}_i \ \ \text{as} \ \ n \rightarrow \infty, 
\] where $f_{d_i(n)}$ is the colouring $f$ restricted to the $d_i(n)$-sequence;

\item The the sequence of colours $c_1,\ldots,c_r$ is large with respect to $\widetilde{f}_1,\ldots, \widetilde{f}_r$;

\item $c_1,\ldots,c_r$ are distinct.
\end{enumerate}
\end{lemma}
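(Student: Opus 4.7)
We proceed by induction on $r$. The base case $r = 1$ is immediate: take $d_1(n) = n$, apply Fact~\ref{Compactness} to extract a subsequence along which $f_{d_1(n)}$ converges pointwise to some $\tilde{f}_1 \colon \mathbb{N} \to [k]$, and then invoke Corollary~\ref{cor:vdw} to produce a colour $c_1$ large for $\tilde{f}_1$.

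For the inductive step, suppose $c_1, \ldots, c_r$ and $\tilde{f}_1, \ldots, \tilde{f}_r$ satisfying (1)--(3) have been constructed. For each large integer $M$, first use the largeness property (2) to pick an arithmetic progression $P_M$ of length $M$, common difference $\Delta_M$, minimum element $x_0^M$, on which each $\tilde{f}_i$ equals $c_i$; then, using (1), select $D_i^M$ from $\{d_i(n)\}_n$ large enough that $f(2^{D_i^M \cdot 2^x}) = c_i$ for all $x \in P_M$ and $i \in [r]$. Next, apply partition regularity of $\mathcal{A}$ (via the bound $P := P(\mathcal{A}; k)$) to the $k$-colouring $h_M(x) := f(2^{\Delta_M x})$, producing a monochromatic $(x_1^M, \ldots, x_n^M) \in \mathcal{A}$ with each $x_j^M \le P$ and common $h_M$-colour $c^*_M$.

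The key observation is that $c^*_M$ must lie outside $\{c_1, \ldots, c_r\}$ for all sufficiently large $M$. Indeed, if $c^*_M = c_i$, then setting $a := 2^{D_i^M \cdot 2^{x_0^M}}$ and $b := 2^{\Delta_M}$ produces a monochromatic exponential $\mathcal{A}$-pattern of colour $c_i$: one has $f(a) = c_i$; the iterates $a^{b^t} = 2^{D_i^M \cdot 2^{x_0^M + \Delta_M t}}$ lie at positions $x_0^M + \Delta_M t \in P_M$ of the $D_i^M$-sequence, so $f(a^{b^t}) = c_i$ for $t = 1, \ldots, M-1$; and $f(b^{x_j^M}) = h_M(x_j^M) = c^*_M = c_i$. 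Once $M$ exceeds a constant depending only on $P$ and $W$, this is a valid pattern, contradicting the hypothesis. A pigeonhole over $M$ then fixes a colour $c_{r+1} \in [k] \setminus \{c_1, \ldots, c_r\}$ with $c^*_M = c_{r+1}$ along an infinite subsequence, securing (3). A second application of Fact~\ref{Compactness} to $\{f_{\Delta_M}\}$ along this subsequence yields a limit $\tilde{f}_{r+1}$, and we set $d_{r+1}(n) := \Delta_{M_n}$, giving (1).

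The main obstacle will be verifying (2) for the extended data, that $c_1, \ldots, c_{r+1}$ is large for $\tilde{f}_1, \ldots, \tilde{f}_{r+1}$: the construction above only gives information about $f_{\Delta_M}$ at the scattered points $2^{\Delta_M x_j^M}$, not on long arithmetic progressions. My plan for this step is to run Lemma~\ref{ExtentionLemma} in parallel with the main construction --- at each stage $M$, applying the extension lemma to the augmented family $\tilde{f}_1, \ldots, \tilde{f}_r, f_{\Delta_M}$ to produce a colour $c^{(M)}$ witnessing that $c_1, \ldots, c_r, c^{(M)}$ is large for that family, pigeonholing on $M$ to fix $c^{(M)} \equiv c^{**}$, and then ruling out $c^{**} \in \{c_1, \ldots, c_r\}$ via another appeal to the no-pattern hypothesis, now exploiting the simultaneous constancy of $\tilde{f}_i$ and $f_{\Delta_M}$ on a common progression to assemble a monochromatic exponential $\mathcal{A}$-pattern. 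One identifies $c^{**}$ with $c_{r+1}$ and transfers the largeness to $\tilde{f}_{r+1}$ by compactness. The delicate point in this final argument is matching common differences of the witnessing progressions with the multiplicative exponential structure so that the assembled tuple genuinely belongs to the exponential $\mathcal{A}$-pattern.
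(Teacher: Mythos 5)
Your overall skeleton — induction on $r$, find a long progression via largeness, apply $P(\mathcal{A};\cdot)$, use the no-pattern hypothesis to rule out old colours, pass to a convergent subsequence, then invoke Lemma~\ref{ExtentionLemma} — tracks the paper's proof, but a central piece is missing and your proposed fix for property~(2) does not close the gap.

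The missing idea is the paper's product colouring. You colour $[P(\mathcal{A};k)]$ by the $k$-colouring $h_M(x) = f(2^{\Delta_M x})$ and obtain a single $h_M$-monochromatic tuple $(x_1^M,\ldots,x_n^M)$. But the new colouring you then want to take a limit of is $f_{\Delta_M}$, which by definition is $y \mapsto f\bigl(2^{\Delta_M 2^y}\bigr)$ — a completely different function from $h_M(x)=f(2^{\Delta_M x})$. Your single monochromatic tuple gives information only about the numbers $2^{\Delta_M x_j^M}$, and these are not points of the $\Delta_M$-sequence, so it tells you nothing about the values of $f_{\Delta_M}$, let alone about the entire colouring. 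Consequently you cannot conclude that your limit $\tilde{f}_{r+1}$ avoids $c_1,\ldots,c_r$, which is precisely what the paper uses to guarantee that the colour produced by Lemma~\ref{ExtentionLemma} is automatically new; this breaks both~(2) and~(3) as you define them.

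The paper avoids this by colouring each $x\in[P(\mathcal{A};k^{M'})]$ with the \emph{$k^{M'}$-tuple} $\bigl(f(2^{d(M)x2^1}),\ldots,f(2^{d(M)x2^{M'}})\bigr)$, i.e.\ an entire initial segment of the $d(M)x$-sequence, and then declares the new parameter to be $d' = d(M)x_1$. Monochromaticity of the $\mathcal{A}$-tuple in this richer colouring yields, for \emph{every} $y\in[M']$, a candidate pattern with $b = 2^{d(M)2^y}$, and ruling each of these out shows that the first $M'$ values of $f_{d'}$ avoid $\{c_1,\ldots,c_{r-1}\}$. Letting $M'\to\infty$ and taking a convergent subsequence then gives a limit colouring that avoids the old colours everywhere. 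This is what makes the Extension Lemma produce a genuinely new colour in one shot, with~(2) and~(3) for free. Your alternative plan of "running Lemma~\ref{ExtentionLemma} in parallel" does not resolve this: a progression on which $f_{\Delta_M}$ is constant is a constraint on $f(2^{\Delta_M 2^y})$ for $y$ in a progression, while the exponential $\mathcal{A}$-pattern requires control of $f(b^{x_j})$ for the specific integers $x_j\in\mathcal{A}$, and there is no choice of $b$ that makes these two families of constraints coincide. Without the $k^{M'}$-colouring (or an equivalent device) tying the $\mathcal{A}$-tuple to the new $d$-sequence at all positions of an initial segment, the induction cannot be closed.

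Your assembly of the exponential pattern inside the "key observation," with $a = 2^{D_i^M 2^{x_0^M}}$ and $b = 2^{\Delta_M}$, is essentially correct and matches the paper's corresponding computation, so that part of the argument is sound; the issue is solely what you extract from it.
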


\begin{proof}
We apply induction on $r$. For $r = 1$, choose $d_1(n) = 1$ for all $n \in \mathbb{N}$. Thus $f_{d_1(n)}$ trivially converges to a $k$-colouring $\tilde{f}_1$. Now, by the corollary to van der Waerden's theorem above (Corollary \ref{cor:vdw}), there exists $c_1 \in [k]$ that is large with respect to $\widetilde{f}_1$. This proves the base case of the induction.
\paragraph{}
For the inductive step, suppose that we have found colourings $\widetilde{f}_1, \ldots ,\widetilde{f}_{r-1}$ with associated colours $c_1, \ldots ,c_{r-1}$ that satisfy the statement of the lemma. In what follows, we let $M \in \mathbb{N}$ be a parameter. Now since $c_1, \ldots ,c_{r-1}$ is large for $\widetilde{f}_1, \ldots , \widetilde{f}_{r-1}$, we may find a progression $P_M = \{ d(M)x + a(M) : x \in [M] \}$ of length $M$ with $\tilde{f}_i (P_M) = \left\lbrace c_i \right\rbrace$, for each $i \in [r-1]$. Now let $M' \in \mathbb{N}$ be a (new) parameter and define 
\[ h(M') = \max\left\lbrace W(x_1,\ldots,x_n) : x_1,\ldots,x_n \in [P(\mathcal{A};k^{M'})] \right\rbrace ,\] while recalling that $P(\mathcal{A};k)$ is the smallest integer so that every $k$-colouring of $\mathbb{N}$ admits a monochromatic $x_1,\ldots,x_n$ such that $(x_1,\ldots,x_n) \in \mathcal{A}$ and $x_1,\ldots,x_n \leq P(\mathcal{A};k)$. For each appropriate $M,M'$, we consider numbers of the form $ 2^{d(M)x2^{y}} $, where $x \leq P(\mathcal{A};k^{M'})$ and $y \leq M'$. In particular, we define a colouring $F = F_{M,M'} :[P(\mathcal{A}; k^{M'})] \rightarrow [k^{M'}]$ by
\[ F(x) = \left( f\left(2^{d(M)x2^1}\right) , f\left(2^{d(M)x2^2}\right), \ldots , f\left(2^{d(M)x2^{M'}}\right) \right) 
\] and observe that $F$ defines a $k^{M'}$-colouring of $[P(\mathcal{A}; k^{M'})]$ and therefore we can find a set of positive integers $x_1 = x_1(M,M'),\ldots,x_n = x_n(M,M')$, with $(x_1,\ldots,x_n) \in \mathcal{A}$, which is monochromatic with respect to the colouring $F$. 
We now show that the colouring along the sequence 
\[ 2^{d(M)x_12^1},\ldots, 2^{d(M)x_12^{M'}}
\] is rather constrained, provided $M$ is sufficiently large compared to $M'$.

\begin{claim} \label{atmostlElementsInSequence} If $M \geq  2^{M'}h(M')$, then none of the elements 
\[ 2^{d(M)x_12^1} , \ldots , 2^{d(M)x_12^{M'}} 
\] receive any of the colours $c_1,\ldots,c_{r-1}$.
\end{claim}
\begin{proof}
We show that if just one of these elements is coloured by a colour of $\{c_1,\ldots,c_p \}$, we can find a monochromatic exponential $\mathcal{A}$-system, thus obtaining a contradiction. So, assume that there is an element $ 2^{d(M)x_12^y} $, with $y \in [M']$, that receives colour $c_p$, with $p \in [r-1]$. By the definition of the $x_1,\ldots,x_n$, it follows that all of the elements 
\[ 2^{d(M)x_12^y} ,2^{d(M)x_22^y}, \ldots, 2^{d(M)x_n 2^y} 
\] receive colour $c_p$. 
\paragraph{}
Next, choose an integer $N = N(M)$ to be large enough so that the colouring $f_{d_p(N)}$ agrees with $\widetilde{f}_{p}$ in at least $\max P(M)$ places. Such a choice of $N$ exists, as we are granted $f_{d_p(n)} \rightarrow \widetilde{f}_p$ as $n \rightarrow \infty$, by the induction hypothesis. As a result, we have that $P_M$ is coloured by $f_{d_p(N)}$ exactly as it is coloured by $\widetilde{f}_p$. 
\paragraph{}
We claim that $(x_1,\ldots,x_n)$, $a = 2^{d_p(N)2^{a(M)}}$, and $b = 2^{d(M)2^y}$ define an exponential $\mathcal{A}$-system that is monochromatic in the colour $c_p$. We already know that $f(b^{x_1}) = \cdots =f(b^{x_n}) = c_p$, so it only remains to check the colour of $a \star (b^l)$, for each $l \in [W(x_1,\ldots,x_n)]$. So fix $l \in [W(x_1,\ldots,x_n)]$ and write
\[ f\left(a^{b^l} \right) = f\left( 2 \star \left( d_p(N)2^{ a(M) + d(M)l2^y } \right)\right) = f_{d_p(N)}\left( a(M) + d(M)l2^y \right).
\] Now since $l2^y \leq W(x_1,\ldots,x_n)2^{M'} \leq M$, our choice of $N$ allows us to conclude that the above is equal to 
\[ \tilde{f}_p\left( a(M) + d(M)l2^y \right) = c_p ,
\] where this last inequality holds as $P_M = \{a(M) + d(M)x : x \in [M] \}$ is a progression with the property that $\tilde{f}_p(P_M) = c_p$, as we assumed above. 
\paragraph{}
Hence we have found a exponential $\mathcal{A}$-system, monochromatic in $c_p$. This contradicts the assumption on $f$ and completes the proof of the claim. 
\end{proof}
So for each $M' \in \mathbb{N}$, we set $d'(M') = d\left(2^{M'}h(M')\right)x_1\left(2^{M'}h(M'),M'\right)$ and apply the compactness property (i.e. Fact~\ref{Compactness}) to find a subsequence of the $\{d'(M')\}$ for which the sequence of colourings $\left( f_{d'(M')} \right)_{M' \in \mathbb{N}}$ converges. We take $\{ d_r(M)\}_M$ to be this subsequence and $\tilde{f}_r$ to be the corresponding limiting colouring. 
\paragraph{}
Now note that we have $c_1,\ldots,c_{r-1} \not\in \tilde{f}_r(\mathbb{N})$, for if $\tilde{f}_r(x_0) = c_i$ for some $x_0 \in \mathbb{N}$ and $i \in [r-1]$, it would follow, for sufficiently large $M'$, that the integer 
\[ 2 \star \left( d'(M')2^{x_0} \right) = 2 \star \left( d\left(2^{M'}h(M')\right)x_1\left(2^{M'}h(M'),M'\right)2^{x_0} \right), 
\] would receive the colour $c_i$, which is in contradiction with Claim~\ref{atmostlElementsInSequence}.
\paragraph{} 
Finally, we choose the colour $c_r$. This is easily done; since $c_1, \ldots ,c_{r-1}$ is large for $\widetilde{f}_1, \ldots , \widetilde{f}_{r-1}$, by Lemma \ref{ExtentionLemma}, we may find a colour $c_r \in [k]$ so that $c_1, \ldots ,c_r$ is large with respect to $\tilde{f}_1, \ldots , \tilde{f}_r$. It is clear that the colour $c_r$ must be distinct from $c_1,\ldots,c_{r-1}$ as $c_1,\ldots,c_{r-1} \not\in \tilde{f}_r\left( \mathbb{N} \right)$. This concludes the induction step of the proof and hence we are done, by induction. 
\end{proof}

\section{Proof of Theorem~\ref{thm:MainClassification}} \label{sec:ProofOfClassification}

We are now in a position to prove our classification of partition regular, exponential systems. Recall that a binary relation $R$ comes implicitly in the definition of the systems $\mathcal{L}$ and $\mathcal{R}$. In what follows, we regard this relation as a directed graph in the obvious way, thus allowing us to borrow from the terminology of directed graphs. Indeed, call a digraph \emph{weakly connected} if the underlying, undirected graph is connected, and say that a subgraph of $G$ is a \emph{weak component} of $G$ if this subgraph is a component in the underlying, undirected graph. 
\paragraph{}
\emph{Proof of Theorem~\ref{thm:MainClassification} : } For $n \in \mathbb{N}$, $R \subseteq [n]^2$, and $C_k(i,j) \in \mathbb{Z}$,  $i,j,k \in [n]$, we consider the system of exponential equations $\mathcal{E} = \mathcal{E}\left(R,\{C_k(i,j)\}\right)$, along with the associated linear system of equations $\mathcal{L} = \mathcal{L}\left(R,\{C_k(i,j)\}\right)$. On a technical note, we may assume that we have no equations of the form $X_i = X_j$, $i,j \in [n]$ in our system. For if the equation $X_i = X_j$ occurs in our system, we may obtain an equivalent system by simply replacing all occurrences of $X_i$ with $X_j$ in $\mathcal{E}$ and then removing the equation $X_i = X_j$.

\paragraph{} Let us first assume that $\mathcal{L}$ is partition regular. We let $\mathcal{A} \subseteq \mathbb{N}^n$ be the collection of positive-integer solutions to the system of equations $\mathcal{L}$ and we define the weight function $W: \mathbb{N}^n \rightarrow \mathbb{N}$, by $W(x_1,\ldots,x_n) = \left(\sum_{i,j,k}|C_k(i,j)|\right)\sum_i^n x_i $, for all $x_1,\ldots,x_n \in \mathbb{N}$.
\paragraph{}
Now, given a finite colouring $f$ of the integers, apply Theorem \ref{thm:LiftingPatterns} to find integers $a,b > 1$ and $z_1,\ldots,z_n \in \mathbb{N}$ so that all of the integers 
\[ a,b^{z_1}, \ldots, b^{z_n}, a^{b^1}, \ldots,a^{b^{W(z_1,\ldots,z_n)}} 
\] are given the same colour by $f$. 
\paragraph{}
We now define numbers $x_1,\ldots,x_n,y_1,\ldots,y_n$ that will constitute a monochromatic solution to $\mathcal{E}$. We start by selecting $y_i = b^{z_i}$, for $i \in [n]$. Now assume that (wlog) that the underlying, undirected graph of $R$ has $m \in [n]$ weak components and that $1,\ldots, m$ are representative vertices from each of these $m$ components. We define the (auxiliary) integers $k_i$, $i \in [n]$, by first choosing a (not necessarily directed) path $P$ between $i$ and a ``representative'' vertex $j \in [m]$. We then set
\[ k_i = \sum_{e \in E(P)} (-1)^{d(e)}\left(C_1(e)z_1 + \cdots + C_n(e)z_n \right) .
\] Of course, the value of $k_i$ is independent of the path $P$, for if we have another path $P'$ from $i$ to $j$ we set 
\[ k'_i = \sum_{e \in E(P')} (-1)^{d(e)}\left(C_1(e)z_1 + \cdots + C_n(e)z_n \right), \] and observe that 
\[ k_i - k'_j = \sum_{e \in E(C)} (-1)^{d(e)}\left(C_1(e)z_1 + \cdots + C_n(e)z_n \right) \]
where, $C$ is the closed walk formed by first traversing $P$ and then traversing $P'$ backwards. We may then partition the edges of the closed walk $C$ as a union of cycles $C^1,\ldots,C^l$, $l \in \mathbb{N}$, and thus decompose the above sum as
\[ = \sum_{i=1}^l \sum_{e \in E(C_i)} (-1)^{d(e)}\left(C_1(e)z_1 + \cdots + C_n(e)z_n \right) ,
\] which is clearly $0$, for $z_1,\ldots,z_n$ is a solution to $\mathcal{L}$.
\paragraph{}
We now select $x_1 = \cdots = x_m = a$ and then define $x_i = a^{b^{k_i}}$, for $i > m$. We clearly have that $k_i \leq W(x_1,\ldots,x_n)$, for each $i \in [n]$ and therefore all of our choices of the $x_1,\ldots,x_n,y_1,\ldots,y_n$ receive the same colour from $f$. It only remains to check that they satisfy the system of equations $\mathcal{E}$. To this end, we note that for an edge $(i,j) \in R$, we have
\[ k_j - k_i = C_1((i,j))z_1 + \cdots + C_n((i,j))z_n ,
\] which follows from the ``independence of path'' argument above. So, finally, if $e = (i,j) \in R$ we have 
\[  x_i \star \left(y_1^{C_1(e)} \cdots y_n^{C_n(e)}\right) = a \star \left( b \star \left( k_i + C_1(e)z_1 + \cdots + C_n(e)z_n \right) \right) \]
\[ = a \star \left( b \star k_j  \right) = x_j ,
\] as desired. This proves that the $x_1,\ldots,x_n,y_1,\ldots,y_n$ indeed form a solution to $\mathcal{E}$ and thus we have shown that $\mathcal{E}$ is partition regular.  
\paragraph{}
We now show that if $\mathcal{L}$ is \emph{not} partition regular, we may produce a colouring demonstrating that $\mathcal{E}$ is not partition regular. For $x \in \mathbb{N}$, write $x$ in its prime expansion $x = p_1^{e_1} \cdots p_k^{e_k}$ and define the function $\nu(x) = e_1 + \cdots + e_k$. This function has three simple properties that will be useful for us.
\begin{enumerate}
\item If $x > 1 $ then $\nu(x) > 0$; \label{property1ofNu}
\item $\nu(xy) = \nu(x) + \nu(y)$ and, in particular, $\nu(a^b) = b\nu(a) $; \label{property2ofNu}
\item $\nu(x)$ takes integers values.
\end{enumerate}
Now assume that $\mathcal{L}$ is not partition regular and let $c$ be a finite colouring that forbids monochromatic solutions to $\mathcal{L}$. We define the colouring $f : \mathbb{N}\setminus \{1 \} \rightarrow [p-1]$ to be 
\[ f(x) = c(\nu(x)).
\] To see that this colouring has the required property, we would like to apply the function $\nu \circ \nu$ to both sides of each of our equations. We encounter a tiny wrinkle as $\nu(x)$ is only defined when $x > 1$ and thus $\nu(\nu(x))$ is only defined when $x$ is a composite integer. So let us momentarily rewrite our equations in the form 
\begin{equation}\label{equ:rewrite1} X_i^{Y_1^{C'(i,j)} \cdots Y_n^{C'(i,j)}} = X_j^{Y_1^{C''(i,j)} \cdots Y_n^{C''(i,j)}}, 
\end{equation} where $C'(i,j),C''(i,j)$ are non-negative integers, for all $(i,j) \in R$. Now, it is only possible for one side of such an equation to be composite if the equation is in the form $X_i = X_j$, $i\not=j \in [n]$, as we are always assuming that $Y_1,\ldots,Y_n, X_1,\ldots,X_n >1$. But we have assumed that we have no such equation in our system. 
\paragraph{}
With this technicality aside, let us rewrite each equation of (\ref{equ:rewrite1}) by applying the function $\nu^2 = \nu \circ \nu$ to both sizes of every equation in $\mathcal{E}$ to obtain
\[ C_1(e)\nu(Y_1) + \cdots + C_n(e)\nu(Y_n) = \nu^2(X_j) - \nu^2(X_i),
\] after rearranging, for each $e = (i,j) \in R$. Now, given a cycle $C$ of $R$, we may eliminate the the $\nu^2$ terms by summing over the cycle $C$, multiplying by $\pm 1$ according to the orientation of each $e \in C$. We obtain the equations 
\[ \sum_{e \in C} (-1)^{d(e)}\left(C_1(e)\nu(Y_1) + \ldots + C_n(e)\nu(Y_n) \right) = 0,
\] for each cycle $C$ in $R$. Now, if $y_1,\ldots,y_n$ form a monochromatic solution of the above equation, we have that $f(y_1) = \cdots = f(y_n)$
and therefore $c(\nu(y_1)) = \cdots = c(\nu(y_n))$. So if we put $u_1 = \nu(y_1), \ldots, u_n = \nu(y_n)$, we see that $u_1,\ldots,u_n$ satisfy the equation $\mathcal{L}$ and $c(u_1)  = \cdots = c(u_n)$. However, our choice of $c$ forbids this situation. Therefore $\mathcal{E}$ is not partition regular. This completes the proof of Theorem~\ref{thm:MainClassification}. \qed

\section{Acknowledgements}
I should like to thank B\'{e}la Bollob\'{a}s and Imre Leader for comments.

\Addresses
\end{document}